\newcommand{\be}{\begin{equation}}
\newcommand{\ee}{\end{equation}}
\newcommand{\ba}{\begin{eqnarray}}
\newcommand{\ea}{\end{eqnarray}}
\newcommand{\bas}{\begin{eqnarray*}}
\newcommand{\eas}{\end{eqnarray*}}
\newtheorem{Example}{Example}
\newtheorem{Proposition}{Proposition}
\newtheorem{Lemma}{Lemma}
\def\E{\mathbb{E}}
\def\A{\mathbb{A}}
\def\H{\mathbb{H}}
\def\X{\mathbb{X}}
\def\X{\mathbb{X}}
\title{\LARGE \bf
Examples  concerning Abel and Ces\`aro limits }
\date{}
\begin{document}
\maketitle

\begin{center}
Christopher J. Bishop \footnote{Department of  Mathematics,
  Stony Brook University,
Stony Brook, NY 11794-3651, USA, bishop@math.sunysb.edu},
 Eugene~A.~Feinberg \footnote{Department of Applied Mathematics and
Statistics,
 Stony Brook University,
Stony Brook, NY 11794-3600, USA, eugene.feinberg@sunysb.edu}, \
and Junyu Zhang \footnote{School of Mathematics and Computational
Science,
  Sun Yat-sen University, Guangzhou, 510275, P. R. China,  mcszhjy@mail.sysu.edu.cn} \\

\smallskip
\end{center}

\begin{abstract}
This note describes examples of all possible equality and strict
inequality relations between upper and lower Abel and Ces\`aro
limits  of sequences bounded above or below.  It also provides
applications to Markov Decision Processes.


\end{abstract}

{\bf Keywords:} Tauberian theorem, Hardy-Littlewood theorem, Abel
limit, Ces\`aro limit

\section{Introduction}
For a sequence $\{u_n\}_{n=0,1,\ldots}$ consider  lower and upper
Ces\`aro limits
\[  \b{\sl C}=\liminf_{n\to\infty} \frac{1}{n}\sum_{i=0}^{n-1} u_i,\qquad
 \bar{C}=\limsup_{n\to\infty} \frac{1}{n}\sum_{i=0}^{n-1} u_i
\]
and lower and upper Abel limits
\[ \b{\it A}=\liminf_{\alpha\to 1-} (1-\alpha)\sum_{n=0}^\infty u_n\alpha^n,\qquad
 \bar{A}=\limsup_{\alpha\to 1-} (1-\alpha)\sum_{n=0}^\infty
 u_n\alpha^n.
\]

If a sequence $\{u_n\}_{n=0,1,\ldots}$ is bounded above or below
then, according to a Tauberian theorem (see, e.g.,
Sennott~\cite[pp. 281, 282]{Sennott1999}),
\begin{equation}\label{e:TaubT}
\b{\sl C}\le\b{\it A}\le \bar{A}\le \bar{C},
\end{equation}
and, according to the Hardy-Littlewood theorem (see, e.g.,
Titchmarsh~\cite[p. 226]{Titchmarsh}), if $\b{\it A}= \bar{A}$
then
\begin{equation}\label{e:HLT}
\b{\sl C}=\b{\it A}= \bar{A}= \bar{C}.
\end{equation}
In view of the Tauberian and Hardy-Littlewood theorems (1) and
(2), either equalities (\ref{e:HLT}) hold or only the following
relations can be possible:
\begin{equation}\label{e:3}
\b{\sl C}<\b{\it A}< \bar{A}< \bar{C},
\end{equation}
\begin{equation}\label{e:4}
\b{\sl C}=\b{\it A}<\bar{A}= \bar{C},
\end{equation}
\begin{equation}\label{e:5}
\b{\sl C}<\b{\it A}< \bar{A}= \bar{C},
\end{equation}
\begin{equation}\label{e:6}
\b{\sl C}=\b{\it A}< \bar{A}< \bar{C}.
\end{equation}

Hardy \cite{Hardy},  Liggett and Lippman~\cite{Liggett}, Sznajder
and Filar \cite[Example 2.2]{Sznajder},  Sennott~\cite[p.
286]{Sennott1999}, Keating and Reade~\cite{Keating}, and
Duren~\cite[Chapter 7]{Duren} provided at different levels of
details examples
of bounded sequences for which inequalities (\ref{e:3}) hold. 
%
This note demonstrates that inequalities (\ref{e:4})--(\ref{e:6})
may also take place for bounded sequences.  Example~\ref{Ex1}
demonstrates the possibility of (\ref{e:4}), and Example~\ref{Ex2}
demonstrates the possibility of (\ref{e:5}).  Of course,
inequalities (\ref{e:6}) hold for the sequence
$\{-u_n\}_{n=0,1,\ldots},$ if inequalities (\ref{e:5}) hold for a
sequence $\{u_n\}_{n=0,1,\ldots}.$

The Tauberian and Hardy-Littlewood theorems are important for many
applications. For example, 
  they are
used to approximate average costs per unit time by total
discounted costs for Markov Decision Processes (MDPs) and
stochastic games; see e.g.,
\cite{feinberg,Hernandez-Lerma,Liggett,Schal,Sennott1999}.  They
are also used to evaluate long-run behavior of stochastic systems
by using Laplace-Stieltjes transforms, see e.g.,
Abramov~\cite{Abramov}. This study was motivated by applications
to MDPs; see
 Section~\ref{sec: MDP}.


\section{Auxiliary facts}

\begin{Lemma}\label{l1} Let $\{L(n)\}_{n=0,1,\ldots}$ and $\{M(n)\}_{n=0,1,\ldots}$ be two sequences of nonnegative numbers,
$f_n(\alpha)=\alpha^{L(n)}-\alpha^{M(n)},$ $n=0,1,\ldots,$ and
$f^*(\alpha)=\sum_{n=0}^\infty f_n(\alpha)$. If
$L(n)\to\infty$ and $L(n)/M(n)\to 0$ as $n\to\infty,$ then:

(i) there is a sequence $\alpha_n\to 1-$ such that
$f_n(\alpha_n)\to 1$ as $n\to\infty$;

(ii) $\limsup_{\alpha\to 1-} f^*(\alpha)\ge 1.$
\end{Lemma}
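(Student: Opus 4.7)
For part (i), the idea is to choose $\alpha_n$ so that $L(n)$ is much smaller than $1/(1-\alpha_n)$ while $M(n)$ is much larger. Concretely, since $L(n)/M(n)\to 0$ with $L(n)\to\infty$ (so $M(n)\to\infty$ as well), I would set, for example, $1-\alpha_n=1/\sqrt{L(n)M(n)}$, which lies in $(0,1)$ for all large $n$ and tends to $0$. Then
\[
L(n)(1-\alpha_n)=\sqrt{L(n)/M(n)}\to 0,\qquad M(n)(1-\alpha_n)=\sqrt{M(n)/L(n)}\to\infty.
\]
Using $\log\alpha_n\sim -(1-\alpha_n)$ as $\alpha_n\to 1-$, this gives $\alpha_n^{L(n)}=\exp(L(n)\log\alpha_n)\to 1$ and $\alpha_n^{M(n)}=\exp(M(n)\log\alpha_n)\to 0$, whence $f_n(\alpha_n)\to 1$.

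For part (ii), I would feed the sequence from (i) into $f^*$. First observe that once $n$ is large enough, $L(n)\le M(n)$, so there exists $N$ with $f_k(\alpha)\ge 0$ for all $k\ge N$ and every $\alpha\in[0,1)$. Splitting
\[
f^*(\alpha_n)=\sum_{k=0}^{N-1}f_k(\alpha_n)+f_n(\alpha_n)+\sum_{\substack{k\ge N\\ k\ne n}}f_k(\alpha_n),
\]
the last sum is nonnegative, so $f^*(\alpha_n)\ge f_n(\alpha_n)+\sum_{k=0}^{N-1}f_k(\alpha_n)$. Each of the finitely many terms in the head satisfies $f_k(\alpha_n)=\alpha_n^{L(k)}-\alpha_n^{M(k)}\to 1-1=0$ as $\alpha_n\to 1-$, so the head tends to $0$. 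Combined with $f_n(\alpha_n)\to 1$ from (i), this yields $\liminf_{n\to\infty}f^*(\alpha_n)\ge 1$, and since $\alpha_n\to 1-$ this forces $\limsup_{\alpha\to 1-}f^*(\alpha)\ge 1$.

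The only subtlety is that $f^*(\alpha)$ is not assumed to be finite; however, the inequality $f^*(\alpha_n)\ge f_n(\alpha_n)+\sum_{k=0}^{N-1}f_k(\alpha_n)$ holds regardless, by treating the sum as a value in $[0,\infty]$. The main point that has to be handled carefully is the simultaneous balance of the two scales: the choice of $\alpha_n$ must keep $L(n)(1-\alpha_n)$ small while making $M(n)(1-\alpha_n)$ large, which is exactly what the gap condition $L(n)/M(n)\to 0$ allows. No further estimates are needed, and the rest is bookkeeping.
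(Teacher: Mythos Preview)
Your proof is correct. For part (ii) it coincides with the paper's argument: split off the finitely many possibly negative terms, use nonnegativity of the tail, and feed in the sequence from (i). For part (i), however, your route differs from the paper's. The paper differentiates $f_n$ to locate its exact maximizer $\alpha_n=(L(n)/M(n))^{1/(M(n)-L(n))}$ on $[0,1]$ and then checks directly that $f_n(\alpha_n)\to 1$ and $\alpha_n\to 1-$. You bypass calculus entirely by choosing $1-\alpha_n=1/\sqrt{L(n)M(n)}$, the geometric mean of the two natural scales $1/L(n)$ and $1/M(n)$, and then argue via $\log\alpha_n\sim -(1-\alpha_n)$. Your construction makes the mechanism transparent: any $\alpha_n$ with $L(n)(1-\alpha_n)\to 0$ and $M(n)(1-\alpha_n)\to\infty$ would do, and the hypothesis $L(n)/M(n)\to 0$ is precisely what guarantees such a window exists. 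The paper's approach, by contrast, pins down the optimal $\alpha_n$, which is unnecessary for the lemma as stated but would be the natural choice if one wanted quantitative bounds on $1-f_n(\alpha_n)$.
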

\begin{proof} Since
$\lim_{\alpha\to 1-}f_n(\alpha)=0$ for all $n,$
$\limsup_{\alpha\to 1-} f^*(\alpha)=\limsup_{\alpha\to
1-}\sum_{n=m}^\infty f_n(\alpha)$ for any natural $m.$ Choose $m$
such that $L(n)<M(n)$  and $L(n)>1$ when $n\ge m.$

Observe that (i) implies (ii).  So, in the rest of the proof we
prove (i).

%

By differentiating $f_n$ for each $n>m,$ observe that this
function reaches its maximum on $[0,1]$ at the point
\begin{equation}\label{e:21}\alpha_n=\left(\frac{L(n)}{M(n)}\right)^{\frac{1}{M(n)-L(n)}},\end{equation}
 and the maximum value is
\[f_n(\alpha_n)=\left(\frac{L(n)}{M(n)}\right)^{\frac{L(n)}{M(n)-L(n)}}-
\left(\frac{L(n)}{M(n)}\right)^{\frac{M(n)}{M(n)-L(n)}}.\] Since
$\frac{L(n)}{M(n)}\to 0$ as $n\to\infty,$ we have
$\frac{M(n)}{M(n)-L(n)}\to 1$ as $n\to\infty.$ Therefore,
\begin{equation}\label{e:22}\lim_{n\to\infty} f_n(\alpha_n)=\lim_{n\to\infty}\left(\frac{L(n)}{M(n)}\right)^{\frac{L(n)}{M(n)-L(n)}}=
\lim_{n\to\infty}\left(\frac{L(n)}{M(n)}\right)^{\frac{L(n)}{M(n)}\frac{M(n)}{M(n)-L(n)}}=1.\end{equation}
In addition, for $n>m$ \begin{equation*}1\ge
\left(\frac{L(n)}{M(n)}\right)^{\frac{1}{M(n)-L(n)}}\ge
\left(\frac{L(n)}{M(n)}\right)^{\frac{L(n)}{M(n)-L(n)}}\to 1\quad
{\rm as}\quad n\to\infty.\end{equation*} Thus, in view of
(\ref{e:21}) and  (\ref{e:22}),   $\alpha_n\to 1-$ and $
f_n(\alpha_n)\to 1$ as $n\to\infty.$

Recall that

\begin{equation}\label{e:8}\lim_{n\to
\infty}\frac{\sum_{k=1}^{n-1}k!}{n!}=0\qquad {\rm and}\qquad
\lim_{n\to \infty}\frac{\sum_{k=1}^{n}k!}{n!}=1.
\end{equation}
Indeed,
\[ 0\le\lim_{n\to \infty}\frac{\sum_{k=1}^{n-1}k!}{n!}=
\lim_{n\to
\infty}\left[\frac{\sum_{k=1}^{n-2}k!}{n!}+\frac{(n-1)!}{n!}\right]
 \le  \lim_{n\to
\infty}\left\{\frac{(n-2)[(n-2)!]}{n!}+\frac{1}{n}\right\} =0.
\]

\section{Examples}

For a sequence $\{u_n\}_{n=0,1,\ldots},$ define the function
\begin{equation}\label{e:Ab}f(\alpha)=(1-\alpha)\sum_{n=0}^\infty
u_n\alpha^n,\qquad \alpha\in [0,1).
\end{equation}

\begin{Example} \label{Ex1} 
{\rm For $D(k)=\sum_{i=1}^{k}i!,$  $k= 1,2,\ldots,$ let


 \begin{equation}\label{e:exx1}  u_{n}=\begin{cases}1, &{\rm if\ } D(2k-1)\le n< D(2k),\    k= 1,2,\ldots,\\
0, &{\rm otherwise}.\end{cases} \end{equation} }
\end{Example}

%
%

\begin{Proposition}
  Inequalities (\ref{e:4}) hold with $\b{\sl C}=\b{\it A}=0$ and $\bar{C}=\bar{A}=1$ for the sequence $\{u_n\}_{n=0,1,\ldots}$ defined in (\ref{e:exx1}).
\end{Proposition}
\begin{proof}
By using  properties of geometric series, observe that
\begin{equation}\label{e:10}f(\alpha)=\sum_{n=1}^{\infty}f_n(\alpha),\end{equation} where
\begin{equation}\label{e:11}f_n(\alpha)=\alpha^{D(2n-1)}-\alpha^{D(2n)}\ge
0.\end{equation}
In view of (\ref{e:8}),   $D(2n-1)\to \infty$ and
$D(2n-1)/D(2n)\to 0$ as $n\to\infty.$ Formulas 
(\ref{e:10}), (\ref{e:11}) and Lemma~\ref{l1}(ii) imply that
$1\ge\bar{A}=\limsup_{\alpha\to 1-}f(\alpha)\ge 1.$ Thus,
$\bar{A}=1.$  In view of (\ref{e:TaubT}), $\bar{A}\le \bar{C}$.
Since $\bar{C}\le 1$, then $\bar{C}=1.$


Now we show that $\b{\it A}=0.$
%
%
%
%
%
%
%
For each $k= 1,2,\ldots,$ consider the sequence
$\{u^k_{n}\}_{n=0,1,\ldots}$
 \[u_n^{k}=\begin{cases}1, &{\rm if\ } n<D(2k)\ {\rm or}\  n\ge D(2k+1),\\
0, &{\rm otherwise}.\end{cases} \]

Let $f^k$ be the function $f$ from (\ref{e:Ab}) for the sequence
$\{u^k_{n}\}_{n=0,1,\ldots}$.  Since $u_n\le u_n^k$ for each
$n=0,1,\ldots,$ $f(\alpha)\le f^k(\alpha)$ for all $\alpha\in
[0,1),$ $k=1,2,\ldots \ .$  Therefore, to prove that $\b{\it
A}=0,$ it is sufficient to show the existence of a sequence
$\alpha_k\to 1-$ as $k\to\infty$ such that $\lim_{k\to\infty}
f^k(\alpha_k)=0.$

Observe that
\begin{equation*} f^k(\alpha)=(1-\alpha)
\left[\sum_{n=0}^{D(2k)-1} \alpha^n
 + \sum_{n=D(2k+1)}^\infty \alpha^n \right]\\
=1-\alpha^{D(2k)}+\alpha^{D(2k+1)}.
\end{equation*}

In view of Lemma \ref{l1}(i),  there exist $\alpha_k\to 1-$ such
that $ \alpha_{k}^{D(2k)}-\alpha_{k}^{D(2k+1)}\to 1$ as
$k\to\infty.$
%
%
Thus, 
$ f^k(\alpha_k)\to 0$ as $k\to\infty,$ and $\b{\it A}=0.$  This
implies $\b{\sl C}=0$ since $0\le \b{\sl C}\le \b{\it A}.$
\end{proof}

\begin{Example}\label{Ex2}  
 {\rm Let}

%

\begin{equation}\label{e:exx2}
u_{n}=\begin{cases}0, &{\rm if\ } k!\le n<2k!,\ k= 1,2,\ldots,\\
1, &{\rm otherwise}.\end{cases}
\end{equation}

\end{Example}
\begin{Proposition}
Inequalities (\ref{e:5}) hold with $\b{\sl C}=\frac{1}{2},$ $\b{\it A}=\frac{3}{4},$ and
$\bar{C}=\bar{A}=1$  for the sequence $\{u_n\}_{n=0,1,\ldots}$ defined in (\ref{e:exx2}).
\end{Proposition}
\begin{proof}
By (\ref{e:8})
\[ \b{\sl C}=\lim_{n\to
\infty}\frac{1+\sum_{k=2}^{n-1}\left[(k+1)!-2k!\right]}{2n!}
=\lim_{n\to
\infty}\frac{\sum_{k=3}^{n}(k)!-\sum_{k=2}^{n-1}2k!}{2n!}=\frac{1}{2},
\] and
\[ \bar{C}
 =\lim_{n\to \infty}\frac{n!-\sum_{k=1}^{n-1}k!} {n!}
=1.\]

By using the formula for the sum of geometric series,

\[f(\alpha)
=1-\alpha+\sum_{n=1}^{\infty}\left(\alpha^{2n!}-\alpha^{(n+1)!}\right).\]
By Lemma~\ref{l1}(ii), $\bar{A}\ge 1$.  However, $\bar{A}\le
\bar{C}=1.$  Thus, $\bar{A}= 1$.

%
%

%
%
%
%
%
%
%

To compute $\b{\it A},$ define \[g(\alpha)=
1-f(\alpha)=\sum_{n=1}^{\infty}\left(\alpha^{n!}-\alpha^{2n!}\right)\]
and $\bar B=\limsup_{\alpha\to 1-} g(\alpha).$  Then $\b{\it
A}=1-\bar B.$

We compute $\bar B$ first.  Let
$g_n(\alpha)=\alpha^{n!}-\alpha^{2n!},$ $n=1,2,\ldots\ .$ When
$\alpha\in [0,1],$ the function $g_n(\alpha)$ reaches its maximum
at $\alpha_n=2^{-\frac{1}{n!}}$ and $g_n(\alpha_n)=\frac{1}{4}.$
In addition, this function increases on the interval
$[0,\alpha_n]$ and decreases on the interval $[\alpha_n,1].$

Let $\beta_k=2^{-\frac{1}{(k-1)!\sqrt{k}}},$ $k=1,2,\ldots\ .$
When $\alpha\in \left[\beta_k,\beta_{k+1}\right],$ $k=1,2,\ldots,$
then, if $n<k,$ the function $g_n(\alpha)$ decreases and reaches
its maximum on this interval at the point $\beta_k$; if $n>k$ then
it increases and reaches the maximum at the point $\beta_{k+1};$
and, if $n=k,$ it achieves the maximum at $\alpha_k.$ Thus,
\begin{equation}\label{e:14}g(\alpha)=\sum_{n=1}^{k-1}g_n(\alpha)+g_k(\alpha)
+\sum_{n=k+1}^{\infty}g_n(\alpha)<
\sum_{n=1}^{k-1}g_n(\beta_k)+g_k(\alpha)
+\sum_{n=k+1}^{\infty}g_n(\beta_{k+1}).\end{equation} Observe that
\begin{equation}\label{e:15}
\sum_{n=1}^{k-1}g_n(\beta_k)=\sum_{n=1}^{k-1}\beta_{k}^{n!}\left(1-\beta_{k}^{n!}\right)<\sum_{n=1}^{k-1}\left(1-\beta_{k}^{n!}\right)
<\frac{\sum_{n=1}^{k-1} n!}{(k-1)!\sqrt{k}}\ln{2}\to 0\quad{\rm
as}\quad k\to\infty,
\end{equation}
where the last inequality follows from $2^{-x}>1-x\ln{2}$ for
$x>0,$ and
\begin{equation}\label{e:16}
\sum_{n=k+1}^{\infty}g_n(\beta_{k+1})<
\sum_{n=k+1}^{\infty}\beta_{k+1}^{n!}=\sum_{n=k+1}^{\infty}2^{-\frac{n!}{k!\sqrt{k+1}}}
=2^{-\sqrt{k+1}}\sum_{n=k+1}^{\infty}2^{-\frac{n!-(k+1)!}{k!\sqrt{k+1}}}\le 2^{1-\sqrt{k+1}},
\end{equation}
where the last inequality holds because
$\frac{n!-(k+1)!}{k!\sqrt{k+1}}\ge n-(k+1)$ when $n\ge (k+1).$
Thus (\ref{e:15}) and (\ref{e:16}) imply that
\begin{equation}\label{e:17}
\lim_{k\to\infty} \left(\sum_{n=1}^{k-1}g_n(\beta_k)
+\sum_{n=k+1}^{\infty}g_n(\beta_{k+1})\right)=0.
\end{equation}
In conclusion,
\[\bar{B}=\limsup_{k\to\infty}\sup_{\alpha\in [\beta_k,\beta_{k+1}]}
g(\alpha)\le \lim_{k\to\infty}
\left(g_k(\alpha_k)+\sum_{n=1}^{k-1}g_n(\beta_k)
+\sum_{n=k+1}^{\infty}g_n(\beta_{k+1})\right)=\frac{1}{4},\] where
the first equality holds since $\beta_k\to 1,$  the inequality
holds  because of (\ref{e:14}) and because the function $g_k$
reaches its maximum at $\alpha_k$ on the interval $[0,1]$, and the
last equality holds because of $g_k(\alpha_k)=\frac{1}{4}$ and
(\ref{e:17}).  In addition, $\bar{B}\ge \lim_{k\to\infty}
g(\alpha_k)\ge \lim_{k\to\infty} g_k(\alpha_k)=\frac{1}{4}.$
 Thus $\bar{B}=\frac{1}{4}$ and
$\b{\it A}=1-\bar{B}=\frac{3}{4}.$ \end{proof}

\section{On approximations of average costs per unit time by normalized discounted costs for MDPs}  \label{sec: MDP}
Average costs for an MDP can be defined either as upper or as
lower limits of expected  costs per unit time over  finite time
horizons as the time horizon lengths tend to infinity. For each of
these two definitions of average costs, the minimal value is the
infimum of average costs taken over the set of all policies.  As
shown below, if the state space is infinite, Example~\ref{Ex2}
implies that it is possible that one of these two minimal values
can be approximated by normalized total expected discounted costs,
while such  approximations for another one are impossible.

Consider an MDP with a state space
$\X$, action space $\A$, sets of available actions $A(x)$,
transition probabilities $p$, and one-step cost $c$.  Here we
assume that:
\begin{itemize}
\item[(i)] the state space $\X$ is a nonempty countable set,
\item[(ii)] the  action space $\A$ is a measurable space
$(\A,{\cal A})$ such that all its singletons are measurable
subsets, that is, $\{a\}\in{\cal A}$ for each $a\in\A;$
\item[(iii)] for each state $x\in \X$ the set of available actions
$A(x)$ is nonempty and belongs to $\cal A;$ \item[(iv)] if an
action $a\in A(x)$ is chosen at a state $x\in\X$, then $p(y|x,a)$,
where $y\in\X$, is the probability that y is the state at the next
step; it is assumed that $p(\cdot|x,a)$ is a probability mass
function on $\X$ and $p(y|x,\cdot)$ is a measurable function on
$A(x);$ \item[(v)] if an action $a\in A(x)$ is selected at a state
$x\in\X$, then the one-step cost $c(x,a)$ is incurred; it is
assumed that the values $c(x,a)$ are uniformly bounded below, and
the function $c(x,\cdot)$ is measurable on $A(x)$ for each
$x\in\X.$
\end{itemize}

Let $\H_n=\X\times(\A\times \X)^n$ be the set of trajectories up
to the step $n=0,1,\ldots\ .$   For $n=1,2,\ldots,$ consider the
sigma-field ${\cal F}_n$ on $\H_n$ defined as the products of the
sigma-fields of all subsets of $\X$ and $\cal A.$  A policy $\pi$
is a sequence $\{\pi_n\}_{n=0,1,\ldots}$ of transition
probabilities from $\H_n$ to $\A$ such that: (i) for each
$h_n=x_0a_0x_1...a_nx_n\in \H_n,$ $n=0,1,\ldots,$ the probability
$\pi_n(\cdot|h_n)$ is defined on $(\A,{\cal A}),$ and it satisfies
the condition $\pi_n(A(x_n)|h_n)=1,$  and (ii) $\pi_n(B|\cdot)$ is
a measurable function on $(\H_n,{\cal F}_n)$ for each $B\in {\cal
A}.$  A policy $\pi$ is called stationary if there is a mapping
$\phi:\, \X\to \A$ such that $\phi(x)\in A(x)$ for all $x\in\X$
and $\pi_n(\{\phi(x_n)\}|x_0a_0x_1\ldots x_n)=1$ for all
$n=0,1,\ldots,$  $x_0a_0x_1\ldots x_n\in\H_n.$  Since a stationary
policy is defined by a mapping $\phi$, it is also denoted by
$\phi$ with a slight abuse of notations. Sometimes in the
literature, a stationary policy is called nonrandomized
stationary, deterministic stationary, or deterministic.  Let $\Pi$
be the set of all policies.

The standard arguments based on the Ionescu Tulcea
theorem~\cite[Chapter 5, Section 1]{Ne} imply that each initial
state $x$ and policy $\pi$ define a stochastic sequence on the
sets of trajectories $x_0a_0x_1a_1,...\ .$ We denote
by 
$\E_x^\pi$ expectations
for this stochastic sequence.

For an initial state $x\in \X$ and for a policy $\pi,$  the
average cost per unit time is

\begin{equation*}\label{eq:avd}
w^*(x,\pi)=\limsup_{N\to\infty}\frac{1}{N}\E_x^\pi\sum_{n=0}^{N-1}c(x_n,a_n)=\limsup_{N\to\infty}\frac{1}{N}\sum_{n=0}^{N-1}\E_x^\pi
c(x_n,a_n). \end{equation*}

In general, if the performance of a policy $\pi$ is evaluated by a
function $g(x,\pi)$ with values in $[-\infty,\infty],$ where
$x\in\X$ is the initial state, we define the value function
$g(x)=\inf_{\pi\in\Pi} g(x,\pi).$ For $\epsilon\ge 0$, a policy
$\pi $ is called $\epsilon$-optimal, if $g(x,\pi)\le
g(x)+\epsilon$ for all $x\in \X.$  A 0-optimal policy is called
optimal.

For a constant $\alpha\in [0,1),$ called the discount factor,  the
expected total discounted costs are
\begin{equation*}\label{eq:disc}
v_\alpha(x,\pi)=\E_x^\pi\sum_{n=0}^\infty\alpha^n
c(x_n,a_n)=\sum_{n=0}^{\infty}\alpha^n\E_x^\pi c(x_n,a_n).
\end{equation*}

In general, proofs of the existence of stationary optimal policies
for expected average costs per unit time are more difficult than
for expected total discounted costs.  Average costs per unit time
are often analyzed by approximating $w^*(x,\pi)$ with
$(1-\alpha)v_\alpha(x,\pi)$ for the values of $\alpha$ close to 1.
Let
\begin{equation*}\label{eq:discul}
{\bar w}(x,\pi)=\limsup_{\alpha\to 1-}(1-\alpha)v_\alpha(x,\pi).
\end{equation*}
In addition to the upper limit of the average expected costs
(\ref{eq:avd}), consider the lower Ces\`aro limit
\begin{equation}\label{eq:avdl}
w_*(x,\pi)=\liminf_{N\to\infty}\frac{1}{N}\E_x^\pi\sum_{n=0}^{N-1}c(x_n,a_n)=\liminf_{N\to\infty}\frac{1}{N}\sum_{n=0}^{N-1}\E_x^\pi
c(x_n,a_n) \end{equation} and the lower Abel limit
\begin{equation*}\label{eq:discll}
\b{\it w}(x,\pi)=\liminf_{\alpha\to 1-}(1-\alpha)v_\alpha(x,\pi).
\end{equation*}

In view of the Tauberian theorem 
\begin{equation*}\label{eq:taubmdp} w_*(x,\pi)\le \b{\it w}(x,\pi)\le {\bar w}(x,\pi)\le
w^*(x,\pi), \qquad x\in\X,\ \pi\in\Pi.\end{equation*} Therefore,
the same inequalities hold for the values,
\[w_*(x)\le \b{\it w}(x)\le {\bar w}(x)\le
w^*(x), \qquad x\in\X.\] The natural questions are whether
$w^*(x)= {\bar w}(x)$ and whether $w_*(x)= \b{\it w}(x)$?

Let the state space $\X$ be finite.  Then, according to  Dynkin and
Yushkevich~\cite[Chapter 7, Section 3]{DY}, for each stationary policy $\phi$
\begin{equation}\label{eq:taubmdpst} w_*(x,\phi)= \b{\it w}(x,\phi)= {\bar
w}(x,\phi)= w^*(x,\phi), \qquad x\in\X.\end{equation} Though for
some $\epsilon > 0$ stationary $\epsilon$-optimal policies may not
exist for MDPs with finite state and arbitrary action sets (see
Dynkin and Yushkevich~\cite[Chapter 7, Section 8, Example 2]{DY}),
as proved in Feinberg~\cite[Corollary 1]{F80},
\begin{equation}\label{eq:taubmdpval} w_*(x)= \b{\it w}(x)= {\bar
w}(x)= w^*(x), \qquad x\in\X.\end{equation}

Equalities (\ref{eq:taubmdpst})  may not hold, when a stationary
policy $\pi$ is substituted with an arbitrary policy $\pi$. In
fact, all four situations presented in (\ref{e:3})--(\ref{e:6})
are possible with $\bar{C}=w^*(x,\pi),$ ${\bar A}={\bar
w}(x,\pi),$ $\b{\it A}=\b{\it w}(x,\pi),$ and $\b{\sl
C}=w_*(x,\pi).$  Indeed, consider an MDP with a single state and
two actions, that is, $\X=\{x\}$ and $\A=A(x)=\{a,b\}.$ Let also
$c(x,a)=1$ and $c(x,b)=0.$  In addition, $p(x|x,a)=p(x|x,b)=1$
since the process is always at state $x.$ Let at each step
$n=0,1,\ldots$ a policy $\pi$ select actions $a$ and $b$ with
probabilities $\pi_n(a)$ and  $\pi_n(b)$ respectively.  For a
sequence $\{u_n\}_{n=0,1,\ldots},$ let $\pi_n(a)=u_n.$  Then
$\E_x^\pi c(x_n,a_n)=u_n,$ $n=0,1,\ldots,$ and the values of
$w^*(x,\pi),$ ${\bar w}(x,\pi),$ $\b{\it w}(x,\pi)$, and
$w_*(x,\pi)$ are equal to the corresponding Ces\`aro and Abel limits
for the sequence $\{u_n\}_{n=0,1,\ldots}.$ Since all the
inequalities (\ref{e:3})--(\ref{e:6}) are possible for Ces\`aro and
Abel limits of bounded sequences  $\{u_n\}_{n=0,1,\ldots},$ these
inequalities are also possible for $\bar{C}=w^*(x,\pi),$ ${\bar
A}={\bar w}(x,\pi),$ $\b{\it A}=\b{\it w}(x,\pi),$ and $\b{\sl
C}=w_*(x,\pi).$ .

Now let $\X$ be countably infinite.  For each sequence $\{u_n\}_{n=0,1,\ldots}$ consider the MDP with the state space $\X=\{0,1,\ldots\},$ a single action $a$, that is $\A=\{a\},$ transition probabilities $p(x+1|x,a)=1,$ and one-step costs $c(x,a)=u_x,$ $x\in X.$  For this MDP, there is only one policy, and this policy is stationary. We denote this policy by $\phi$ and observe that $\E_0^\phi c(x_n,a_n)=u_n,$ $n=0,1,\ldots\ .$  Thus, equalities (\ref{eq:taubmdpst}) and (\ref{eq:taubmdpval}) may not hold.  In addition, all the inequalities (\ref{e:3})--(\ref{e:6}) are possible with ${\bar C}=w^*(x,\phi),$
${\bar A}={\bar w}(x,\phi),$  $\b{\it A}=\b{\it w}(x,\phi),$
$\b{\sl C}=w_*(x,\phi)$  and  with $\bar{C}=w^*(x),$
${\bar A}={\bar w}(x),$  $\b{\it A}=\b{\it w}(x),$
$\b{\sl C}=w_*(x).$  In particular, $w^*(x)={\bar w}(x)$ does not imply $w_*(x)=\b{\it w}(x),$ and $w_*(x)=\b{\it w}(x)$
does not imply $w^*(x)={\bar w}(x).$
\end{proof}

\section{Acknowledgements} Research of the first coauthor was partially
supported by NSF grant  DMS-1305233.  Research of the second
coauthor was partially supported by NSF grant  CMMI-1335296.
Research of the third coauthor was partially supported   by NSFC,
RFDP, CSC, the Fundamental Research Funds for the Central
Universities, and by Guangdong Province Key Laboratory of
Computational Science. This paper was written when Junyu Zhang was
visiting the Department of Applied Mathematics and Statistics,
Stony Brook University, and she thanks the department for its
hospitality.

\end{document}